\documentclass[10pt]{amsart}
\usepackage{amsmath, amssymb, amsthm, bbold}
\usepackage{paralist, xcolor, tikz, hyperref}
\usepackage[margin=1in,letterpaper,portrait]{geometry}
\usepackage{enumitem}
\usepackage{caption}
\usepackage[noadjust]{cite}
\usepackage{comment}
\usepackage{tikz, tikz-cd} 
 \usetikzlibrary{braids,arrows.meta}
\usepackage[normalem]{ulem}
\hypersetup{
    colorlinks=true,
    urlcolor=blue}

\theoremstyle{plain}
\newtheorem{conjecture}{Conjecture}[]
\newtheorem{theorem}[conjecture]{Theorem}

\newtheorem{lemma}[conjecture]{Lemma}
\newtheorem{proposition}[conjecture]{Proposition}

\newtheorem{remark}[conjecture]{Remark}

\newtheorem*{question*}{Question}

\newcommand{\zero}{\mathbb{0}}
\newcommand{\one}{\mathbb{1}}
\newcommand{\ee}{\mathbf{e}}

\newcommand{\ZZ}{\mathbb{Z}}
\newcommand{\RR}{\mathbb{R}}

\newcommand{\spn}{\mathrm{span}}

\newcommand{\sign}{\mathrm{sign}}

\newcommand{\colorize}[1]{{\textcolor{red}{#1}}}

\author{Jang Soo Kim} 
\address{Department of Mathematics, Sungkyunkwan University (SKKU), Suwon, Gyeonggi-do 16419, South Korea}
\email{jangsookim@skku.edu}

\author{Victor Reiner}
\address{School of Mathematics, University of Minnesota, Minneapolis MN, USA}
\email{reiner@umn.edu}

\author{Laura Schoenzeit}   
\address{School of Mathematics, University of Minnesota, Minneapolis MN, USA}
\email{lschoen3@ncsu.edu}

\keywords{Hypercube, zonotope, adjacency, spectrum, eigenspace,
interlace, Krawtchouk, Kac matrix}

\subjclass{05E30, 05E18}
\title{Note on adjacency spectra for twice punctured hypercubes}

\begin{document}

\begin{abstract}
Eigenspaces of hypercube graph adjacency matrices are well understood. We extend this to subgraphs obtained by removing a vertex or antipodal pair of vertices, motivated by a question on zonotopes.
\end{abstract}

\maketitle

%%%%%%%%%%%%%%%%%%%%%%%%%%%%%%%%%%%%
\section{Introduction}
\label{sec:intro}
%%%%%%%%%%%%%%%%%%%%%%%%%%%%%%%%%%%%

This note addresses a conjecture \cite[Conj.~3.3]{schoenzeit} arising from work of the third author
on spectra of adjacency matrices for the graphs of vertices and edges of certain {\it zonotopes}. A zonotope is the Minkowski sum
$
Z=\{\sum_{i=1}^n c_i v_i: c_i \in [0,1]\}
$
of a collection of line segments in the directions of vectors $v_1,\ldots,v_n$ in $\RR^d$;  equivalently, $Z$ is the image of the {\it $n$-cube} $[0,1]^n$ under a linear map $\RR^n \rightarrow \RR^d$.
An interesting family of zonotopes, 
denoted $Z_{n-1}$ in \cite{schoenzeit},
arises when $\{v_i\}_{i=1}^n$ form a set of $n$ {\it minimally dependent} vectors in $\RR^{n-1}$,
so that all of its proper subsets are linearly independent.  Without altering the facial structure of $Z_{n-1}$, one can rescale each vector $v_i$, change coordinates,
and identify $\RR^{n-1}$ with the hyperplane $\{x\in\RR^n:\sum_{i=1}^n x_i=0\}$ so as to make
$$
\{v_1,v_2,\ldots,v_{n-1},v_n\}=\{e_1-e_2,e_2-e_3,\ldots,e_{n-1}-e_n,e_n-e_1\},
$$
where $e_1,\ldots,e_n$ are the standard basis vectors in $\RR^n$.  Thus $Z_{n-1}$ is the {\it graphical zonotope} for the $n$-cycle graph, as studied by Escobar and McWhirter \cite{escobarmcwhirter} and
Gruji\'c \cite{Grujic}; see \cite[\S2]{Grujic} for more on graphic zonotopes.
It is known (see\footnote{This is the only reference that we found, but earlier references are welcome!} \cite[Cor.~3.7]{escobarmcwhirter}) that the
boundary complex of $Z_{n-1}$ is isomorphic to the subcomplex of faces of
the $n$-cube containing neither of the antipodal vertices \(\zero_n:=(0,\dots,0)\) and \( \one_n:=(1,\dots,1) \). Thus the graph of $Z_{n-1}$
is obtained from the $n$-cube graph $Q_n$ as the deletion $\hat{\hat{Q}}_n$ of the antipodal pair of vertices $\zero_n, \one_n$.  Pictured below are $Q_3, Q_4$ and $\hat{\hat{Q}}_3,\hat{\hat{Q}}_4$, with vertices labeled by subsets, along with the graphs of the {\it hexagon} $Z_2$ and {\it rhombic dodecahedron} $Z_3$:

%%%%% n=3 pictures
\begin{center}
\begin{tikzpicture}
  [scale=.2,auto=left]
  \node (empty) at (0,0) {$\varnothing$}; 
   \node (1) at (-4,4) {$1$}; 
   \node (2) at (0,4) {$2$}; 
   \node (3) at (4,4) {$3$}; 
   \node (12) at (-4,8) {$12$}; 
   \node (13) at (0,8) {$13$}; 
   \node (23) at (4,8) {$23$}; 
   \node (123) at (0,12) {$123$}; 
   \node (Q3) at (-8,6) {$Q_3=$};
  \foreach \from/\to in {empty/1,empty/2,empty/3,
1/12,1/13,2/12,2/23,3/13,3/23,12/123,13/123,23/123}
    \draw (\from) -- (\to);
\end{tikzpicture}
\qquad \qquad \qquad
\raisebox{0.7cm}{
\begin{tikzpicture}
  [scale=.2,auto=left]
   \node (1) at (-4,8) {$1$}; 
   \node (2) at (0,8) {$2$}; 
   \node (3) at (4,8) {$3$}; 
   \node (12) at (-4,12) {$12$}; 
   \node (13) at (0,12) {$13$}; 
   \node (23) at (4,12) {$23$}; 
   \node (label) at (-8,10.4) {$\hat{\hat{Q}}_3=$};
   \node (cong) at (10,10) {$\cong$};
  \foreach \from/\to in {
1/12,1/13,2/12,2/23,3/13,3/23}
    \draw (\from) -- (\to);
\end{tikzpicture}}
\qquad 
\begin{tikzpicture}
  [scale=.2,auto=left]
   \node (1) at (-4,0) {$1$}; 
   \node (2) at (0,8) {$2$}; 
   \node (3) at (4,0) {$3$}; 
   \node (12) at (-4,4) {$12$}; 
   \node (13) at (0,-4) {$13$}; 
   \node (23) at (4,4) {$23$}; 
  \foreach \from/\to in {
1/12,1/13,2/12,2/23,3/13,3/23}
    \draw (\from) -- (\to);
\end{tikzpicture}
\end{center}

%\vskip.1in
%%%%% n=4 pictures

\begin{center}
\begin{tikzpicture}
  [scale=.23,auto=left]
  \node (empty) at (0,0) {$\varnothing$}; 
   \node (1) at (-6,4) {$1$}; 
   \node (2) at (-2,4) {$2$}; 
   \node (3) at (2,4) {$3$}; 
   \node (4) at (6,4) {$4$}; 
   \node (12) at (-8,8) {$12$}; 
   \node (13) at (-5,8) {$13$}; 
   \node (23) at (-2,8) {$23$}; 
   \node (14) at (2,8) {$14$}; 
   \node (24) at (5,8) {$24$}; 
   \node (34) at (8,8) {$34$}; 
   \node (123) at (-6,12) {$123$}; 
   \node (124) at (-2,12) {$124$}; 
   \node (134) at (2,12) {$134$}; 
   \node (234) at (6,12) {$234$}; 
   \node (1234) at (0,16) {$1234$}; 
   \node (Q_4) at (-11,8) {$Q_4=$};
  \foreach \from/\to in {empty/1,empty/2,empty/3,empty/4,
1/12,1/13,1/14,
2/12,2/23,2/24,
3/13,3/23,3/34,
4/14,4/24,4/34,
12/123,12/124,13/123,13/134,23/123,23/234,14/124,14/134,24/124,24/234,34/134,34/234,
123/1234,124/1234,134/1234,234/1234}
    \draw (\from) -- (\to);
\end{tikzpicture}
\quad 
\raisebox{0.7cm}{
\begin{tikzpicture}
  [scale=.25,auto=left]
   \node (1) at (-6,4) {$1$}; 
   \node (2) at (-2,4) {$2$}; 
   \node (3) at (2,4) {$3$}; 
   \node (4) at (6,4) {$4$}; 
   \node (12) at (-8,8) {$12$}; 
   \node (13) at (-5,8) {$13$}; 
   \node (23) at (-2,8) {$23$}; 
   \node (14) at (2,8) {$14$}; 
   \node (24) at (5,8) {$24$}; 
   \node (34) at (8,8) {$34$}; 
   \node (123) at (-6,12) {$123$}; 
   \node (124) at (-2,12) {$124$}; 
   \node (134) at (2,12) {$134$}; 
   \node (234) at (6,12) {$234$}; 
   \node (label) at (-11,8.4) {$\hat{\hat{Q}}_4=$};
  \node (cong) at (12,8) {$\cong$};
\foreach \from/\to in {1/12,1/13,1/14,
2/12,2/23,2/24,
3/13,3/23,3/34,
4/14,4/24,4/34,
12/123,12/124,13/123,13/134,23/123,23/234,14/124,14/134,24/124,24/234,34/134,34/234}
\draw (\from) -- (\to);
\end{tikzpicture}
}
\begin{tikzpicture}
  [scale=.20,auto=left]
   \node (1) at (-6,4) {$1$}; 
   \node (2) at (3,4) {$2$}; 
   \node (3) at (-6,16) {$3$}; 
   \node (4) at (3,12) {$4$}; 
   \node (12) at (0,0) {$12$}; 
   \node (13) at (-8,8) {$13$}; 
   \node (23) at (0,12) {$23$}; 
   \node (14) at (0,8) {$14$}; 
   \node (24) at (8,8) {$24$}; 
   \node (34) at (0,19) {$34$}; 
   \node (123) at (-2,4) {$123$}; 
   \node (124) at (6,4) {$124$}; 
   \node (134) at (-3,12) {$134$}; 
   \node (234) at (6,16) {$234$};
     \foreach \from/\to in {1/12,1/13,
2/12,2/23,2/24,
3/13,3/23,3/34,
12/123,12/124,
13/123,
23/123,23/234,
24/124,24/234,
34/234}
    \draw(\from) -- (\to);
    \foreach \from/\to in {1/14,
4/14,4/24,4/34,
13/134,
23/123,23/234,
14/124,14/134,
34/134,34/234}
    \draw[dashed] (\from) -- (\to);
\end{tikzpicture}
\end{center}

We will exhibit a simple relation between the spectra of the $\{0,1\}$-adjacency matrices for $Q_n, \hat{\hat{Q}}_n$, and extending also to the intermediate graph $\hat{Q}_n$ obtained by deleting only the vertex $\zero_n$ from $Q_n$.  Our goal is Theorem~\ref{thm:main} below,
a precise description of the  eigenspaces for the adjacency matrices  $A_{Q_n}, A_{\hat{Q}_n}, A_{\hat{\hat{Q}}_n}$, including the action of the symmetric group $S_n$ on the eigenspaces.
In particular, this proves \cite[Conj.~3.3]{schoenzeit}. 

%%%%%%%%%%%%%%%%%%%%%%%%%%%%%%%%%%%%%%%%%%%%%%%%%
\section{The hypercube and its adjacency eigenspaces}
\label{sec:hypercube-review}
%%%%%%%%%%%%%%%%%%%%%%%%%%%%%%%%%%%%%%%%%%%%%%%%%

Label the vertices of
the hypercube $[0,1]^n$
as $u=(u_1,\ldots,u_n)$ in $\ZZ_2^n$ where 
$
\ZZ_2:=\ZZ/2\ZZ=\{0,1\}.
$
Two vertices $u,v$ of its graph $Q_n$ will be adjacent exactly when the {\it Hamming distance}
$d(u,v):=\#\{i:u_i \neq v_i\}$ is $1$.
Define also the {\it Hamming weight}
$\omega(u):=\#\{i:u_i=1\}=d(u,\zero_n)$
for $u \in \ZZ_2^n$.

We will view the adjacency matrix $A_{Q_n}$ as a linear operator on the space $V=\RR^{\ZZ_2^n}$. 
More precisely, letting \( \{\ee_u: u\in \ZZ_2^n\} \) be
  the standard basis of \( V \), we have
  \begin{equation}\label{eq:AQn}
    A_{Q_n} \ee_u = \sum_{\substack{v \in \ZZ_2^n:\\ d(u,v)=1}} \ee_v.
  \end{equation}
The symmetric group $S_n$ permutes the set $\ZZ_2^n$
  by \( \sigma(u_1,\dots,u_n) = (u_{\sigma^{-1}(1)},\dots,u_{\sigma^{-1}(n)}) \).
  This $S_n$-action is via  graph symmetries on $Q_n$,
  and thus when $S_n$ permutes the coordinates in $V=\RR^{\ZZ_2^n}$, it will preserve the eigenspaces for the adjacency matrices $A_{Q_n}$.

To describe eigenvectors for the adjacency
matrix $A_{Q_n}$, introduce a $\ZZ_2$-valued dot product on $\ZZ_2^n$ by
$u\cdot v:=\sum_{i=1}^n u_i v_i$.
Then for $u \in \ZZ_2^n$, define a vector $E_u \in \RR^{\ZZ_2^n}$ with coordinates 
$(E_u)_v:=(-1)^{u \cdot v}$.
Note that a permutation $\sigma$ in $S_n$ will have
$\sigma(E_u)=E_{\sigma(u)}$. 
Define a subspace 
\begin{equation}
\label{eq:hypercube-eigenspaces}
V_{n-2k} := \spn_\RR\{E_u: u \in \ZZ_2^n, \omega(u)=k\}.
\end{equation}
Let $(-,-)$ be the usual inner product on  $V=\RR^{\ZZ_2^n}$ for which
$\{\ee_u: u\in \ZZ_2^n\}$
are an orthonormal basis.
The next proposition is standard;  see Stanley \cite[Chapter~2]{Stanley-undergrad-text},   
Cvetkovic, Doob, and Sachs \cite[\S2.6, Example~10]{CvetkovicDoobSachs}.

\begin{proposition}
\label{prop:eigenspaces-for-hypercube}
The vectors \(\{E_u\}_{u \in \ZZ_2^n}\) form an orthogonal basis of
\(V=\RR^{\ZZ_2^n}\) with respect to $(-,-)$:
one has $(E_u,E_v) = 2^n$ if $u=v$ and $(E_u,E_v) =0$ otherwise.
%\[  E_u\cdot E_v =  \begin{cases}   2^n & \mbox{if \( u=v \)},\\   0 & \mbox{otherwise.}  \end{cases} \]
They are eigenvectors for \(A_{Q_n}\) with the following eigenvalue
equations: for \( u\in \ZZ_2^n \) with \( k=\omega(u) \),
\begin{equation}
\label{eqn:eigenvector-equation-for-Eu}
A_{Q_n}(E_u)=(n-2k) E_u.
\end{equation}
Thus for each \(k=0,1,\ldots,n\), the matrix
\(A_{Q_n}\) has eigenvalue \(n-2k\) with
multiplicity \(\binom{n}{k}\), and the
eigenspace \(V_{n-2k}\) 
from \eqref{eq:hypercube-eigenspaces},
on which \(S_n\) acts by permuting the set
\(\{u \in \ZZ_2^n:\omega(u)=k\}\),
or, equivalently, by permuting the \(k\)-subsets of \(\{1,2,\ldots,n\}\).
\end{proposition}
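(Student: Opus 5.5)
The plan is to verify the three claims in the order stated: orthogonality, the eigenvalue equation, and then the consequences for multiplicities and the $S_n$-action.

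\emph{Orthogonality.} Compute
$(E_u,E_v)=\sum_{w\in\ZZ_2^n}(-1)^{u\cdot w+v\cdot w}=\sum_{w}(-1)^{(u+v)\cdot w}$,
where $u+v$ is taken in $\ZZ_2^n$. This sum factors over coordinates as $\prod_{i=1}^n\bigl(1+(-1)^{(u+v)_i}\bigr)$. Each factor equals $2$ when $u_i=v_i$ and $0$ otherwise. Hence the sum is $2^n$ if $u=v$ and $0$ if $u\neq v$. Since there are $2^n=\dim V$ vectors, they are pairwise orthogonal and nonzero, so they form an orthogonal basis.

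\emph{Eigenvalue equation.} Use \eqref{eq:AQn}: the $v$-coordinate of $A_{Q_n}E_u$ is the sum of $(E_u)_w$ over the $n$ neighbors $w=v+e_i$ of $v$, where $e_i$ is the $i$-th unit vector of $\ZZ_2^n$. (The matrix is symmetric, so it does not matter whether one reads rows or columns.) Now $(-1)^{u\cdot(v+e_i)}=(-1)^{u\cdot v}(-1)^{u_i}$, so this coordinate equals $(-1)^{u\cdot v}\sum_{i=1}^n(-1)^{u_i}$. The sum over $i$ is $(n-k)-k=n-2k$, where $k=\omega(u)$. This gives \eqref{eqn:eigenvector-equation-for-Eu}.

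\emph{Eigenspaces and the $S_n$-action.} By the first two steps, $V$ decomposes as the orthogonal direct sum of the spaces $V_{n-2k}$ for $k=0,\dots,n$. The values $n-2k$ are pairwise distinct, so $V_{n-2k}$ is exactly the full $(n-2k)$-eigenspace. Its dimension is $\#\{u:\omega(u)=k\}=\binom nk$, since the spanning set is orthogonal and hence linearly independent.

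For the $S_n$-action, first check $\sigma(E_u)=E_{\sigma(u)}$. Here $\sigma$ acts on $V$ by $\sigma\ee_v=\ee_{\sigma(v)}$, and $\sigma(u)\cdot\sigma(v)=u\cdot v$. Also $\omega(\sigma(u))=\omega(u)$. Hence $S_n$ permutes the basis $\{E_u:\omega(u)=k\}$ of $V_{n-2k}$ exactly as it permutes weight-$k$ vectors, that is, $k$-subsets of $\{1,\ldots,n\}$ via supports. So $V_{n-2k}$ is this permutation representation.

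There is no real obstacle. The only point needing care is matching the conventions of the $S_n$-action on coordinates with the action on $\ZZ_2^n$ when checking $\sigma(E_u)=E_{\sigma(u)}$.
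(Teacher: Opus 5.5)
Your proof is correct and is essentially the standard argument the paper relies on: the paper gives no proof of its own and cites Stanley's Chapter~2 (and Cvetkovic, Doob, and Sachs) for the same character-sum computations, namely $(E_u,E_v)=\sum_w(-1)^{(u+v)\cdot w}$ for orthogonality and $\sum_{i=1}^n(-1)^{u_i}=n-2k$ over the neighbors $v+e_i$ for the eigenvalue. Your deduction of the multiplicities from the orthogonal direct sum, and of the $S_n$-structure from $\sigma(E_u)=E_{\sigma(u)}$, is also exactly what the statement requires.
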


%%%%%%%%%%%%%%%%%%%%%%%%%%%%%%%%%%%%%%%%%%%%%%%%%
\section{Common eigenspaces and the main result}
\label{sec:common-eigensapces}
%%%%%%%%%%%%%%%%%%%%%%%%%%%%%%%%%%%%%%%%%%%%%%%%%

Consider the nested subspaces
\(\hat{\hat{V}} \subset \hat{V} \subset V\), where \(\hat{V}\) is the
subspace of \(V\) in which the \(\zero_n\)-coordinate vanishes, and
\(\hat{\hat{V}}\) is the subspace in which both the \(\zero_n\)- and
\(\one_n\)-coordinates vanish. The adjacency matrices
\(A_{\hat{Q}_n}\) and \(A_{\hat{\hat{Q}}_n}\) act naturally on
\(\hat{V}\) and \(\hat{\hat{V}}\), respectively. Specifically, as in
\eqref{eq:AQn}, for
\( u\in \ZZ_2^n \setminus \{\zero_n\} \) and
\( v\in \ZZ_2^n \setminus \{\zero_n, \one_n\} \), we have
\[
  A_{\hat{Q}_n} \ee_u = \sum_{\substack{w \in \ZZ_2^n \setminus \{\zero_n\}:\\ d(u,w)=1}} \ee_w, \qquad
  A_{\hat{\hat{Q}}_n} \ee_v = \sum_{\substack{w \in \ZZ_2^n \setminus \{\zero_n,\one_n\}:\\ d(v,w)=1}} \ee_w.
\]
Again, $S_n$ permutes the sets $\ZZ_2^n\setminus \{\zero_n\}$ and $\ZZ_2^n\setminus \{\zero_n,\one_n\}$, 
thus permuting the coordinates in $\hat{V}, \hat{\hat{V}}$.
The $S_n$-actions
are again via graph symmetries of $\hat{Q}_n, \hat{\hat{Q}}_n$, so that $S_n$ preserves
the eigenspaces of $A_{\hat{Q}_n}, A_{\hat{\hat{Q}}_n}$.

The next proposition
identifies large common eigenspaces for 
all three adjacency matrices.

\begin{proposition}
\label{prop:difference-eigenvectors}
Let $u,v$ be in $\ZZ_2^n$, 
with $u \neq v$ and $\omega(u)=\omega(v)=k$ for $1 \leq k \leq n-1$.
Then their difference vector $E_u-E_v$
lies in $\hat{\hat{V}}$,
and gives a common $(n-2k)$-eigenvector
for all three matrices $A_{\hat{\hat{Q}}_n}, A_{\hat{Q}_n}, A_{Q_n}$.

\end{proposition}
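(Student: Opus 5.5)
First I will check that $E_u-E_v$ lies in $\hat{\hat{V}}$ by evaluating its two special coordinates. At $\zero_n$ we get $(E_u)_{\zero_n}=(-1)^{u\cdot \zero_n}=1$, and likewise for $v$, so the difference has $\zero_n$-coordinate $0$. At $\one_n$ we get $(E_u)_{\one_n}=(-1)^{u\cdot\one_n}=(-1)^{\omega(u)}=(-1)^k$. The same holds for $v$, because $\omega(v)=k$ as well, so this coordinate also vanishes. Hence $E_u-E_v\in\hat{\hat{V}}\subset\hat{V}\subset V$. This step uses only the equal Hamming weights and is immediate.

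For $A_{Q_n}$ the eigenvector claim follows from Proposition~\ref{prop:eigenspaces-for-hypercube}: by \eqref{eqn:eigenvector-equation-for-Eu} both $E_u$ and $E_v$ are $(n-2k)$-eigenvectors, hence so is their difference. It is nonzero because $E_u,E_v$ are orthogonal and nonzero.

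For the punctured graphs I will compare the operators on the relevant subspaces. The key observation is the following. For $x\in\hat{V}$, the vector $A_{\hat{Q}_n}x$ is obtained from $A_{Q_n}x$ by deleting its $\zero_n$-coordinate, i.e.\ by projecting onto $\hat{V}$. This holds because $x$ has no $\zero_n$-component, so edges incident to $\zero_n$ contribute only to the $\zero_n$-coordinate of $A_{Q_n}x$. Similarly, for $x\in\hat{\hat{V}}$, the vector $A_{\hat{\hat{Q}}_n}x$ is the projection of $A_{Q_n}x$ onto $\hat{\hat{V}}$, obtained by deleting the $\zero_n$- and $\one_n$-coordinates. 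Now take $x=E_u-E_v$. We have $A_{Q_n}x=(n-2k)x$, and $x$ already lies in $\hat{\hat{V}}$, so projecting it does nothing. Therefore $A_{\hat{Q}_n}x=(n-2k)x$ and $A_{\hat{\hat{Q}}_n}x=(n-2k)x$.

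The only point needing care, and the main obstacle such as it is, is justifying the projection identity cleanly. I would do this coordinatewise. For $w\neq\zero_n$ (respectively $w\notin\{\zero_n,\one_n\}$), the $w$-coordinate of $A_{Q_n}x$ is $\sum_{d(w,y)=1}x_y$. Terms with $y$ a deleted vertex vanish because $x_y=0$. The remaining sum is exactly the $w$-coordinate of the punctured adjacency operator applied to $x$.
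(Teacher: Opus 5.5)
Your proposal is correct and follows essentially the same route as the paper. Both arguments check that the $\zero_n$- and $\one_n$-coordinates of $E_u$ and $E_v$ agree. Both then observe coordinatewise that the sums defining $A_{\hat{Q}_n}$ and $A_{\hat{\hat{Q}}_n}$ differ from those for $A_{Q_n}$ only by terms at the deleted vertices, which vanish, so the eigenvalue equation $A_{Q_n}(E_u-E_v)=(n-2k)(E_u-E_v)$ transfers. Your extra remark that $E_u-E_v\neq 0$ is a harmless addition the paper leaves implicit.
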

\begin{proof}
The assertion that the difference vector $E_u-E_v$
lies in $\hat{\hat{V}}$
follows from these equalities:
\begin{align*}
(E_u)_{\zero_n}&=(-1)^{u \cdot \zero_n}=(-1)^0=(-1)^{v \cdot \zero_n}=(E_v)_{\zero_n},\\
(E_u)_{\one_n}&=(-1)^{u \cdot \one_n}=(-1)^k=(-1)^{v \cdot \one_n} =(E_v)_{\one_n}.
\end{align*}
To prove the common eigenvector assertion, first note that given 
any $w$ in
$\ZZ_2^n$, one has
$$
(A_{Q_n} E_u)_w
=\sum_{\substack{w' \in \ZZ_2^n:\\
d(w,w')=1}} (E_u)_{w'}
=\sum_{\substack{w' \in \ZZ_2^n:\\
d(w,w')=1}} (-1)^{u \cdot w'}.
$$
Similarly, for \( w\in \ZZ_2^n \setminus \{\zero_n\} \), we have
\begin{align*}
(A_{\hat{Q}_n} (E_u-E_v))_w
&=\sum_{\substack{w' \in \ZZ_2^n \setminus \{\zero_n\}:\\
d(w,w')=1}}
[(-1)^{u \cdot w'}
- (-1)^{v \cdot w'}]
\overset{(a)}{=}\sum_{\substack{w' \in \ZZ_2^n:\\
d(w,w')=1}} 
[(-1)^{u \cdot w'}
- (-1)^{v \cdot w'}]\\
&=(A_{Q_n} (E_u-E_v))_w
\overset{(b)}{=}(n-2k) (E_u-E_v)_w,
\end{align*}
where equality (a) used the fact that $(-1)^{u \cdot \zero_n}=(-1)^{v \cdot \zero_n}$, and (b) used the eigenvector equation \eqref{eqn:eigenvector-equation-for-Eu}.  
Similarly, since $(-1)^{u \cdot \one_n}=(-1)^k=(-1)^{v \cdot \one_n}$, 
for any $w\in \ZZ_2^n \setminus \{\zero_n,\one_n\}$, we have
\begin{align*}
(A_{\hat{\hat{Q}}_n} (E_u-E_v))_w
&=\sum_{\substack{w' \in \ZZ_2^n \setminus \{\zero_n,\one_n\}:\\
d(w,w')=1}} 
[(-1)^{u \cdot w'}
- (-1)^{v \cdot w'}]
=\sum_{\substack{w' \in \ZZ_2^n:\\
d(w,w')=1}} 
[(-1)^{u \cdot w'}
- (-1)^{v \cdot w'}]\\
&=(A_{Q_n} (E_u-E_v))_w
=(n-2k) (E_u-E_v)_w. \qedhere
\end{align*}
\end{proof}

Bearing in mind Proposition~\ref{prop:difference-eigenvectors}, define these codimension one subspaces of
$V_{n-2k}$ for $k=0,1,\ldots,n$:
\begin{equation}
\label{eq:common-eigenspaces}
\hat{\hat{V}}_{n-2k}:=
\spn_\RR\{E_u-E_v: u,v \in \ZZ_2^n, \omega(u)=\omega(v)=k\}.
\end{equation}
Note that \( \hat{\hat{V}}_{n-2k}=\{0\} \) for \( k=0 \) and \( k=n \).
The spaces \( \hat{\hat{V}}_{n-2k} \) for \( 1\le k\le n-1 \) already account for most of the eigenspace decompositions of the three symmetric matrices $A_{Q_n}, A_{\hat{Q}_n}, A_{\hat{\hat{Q}}_n}$.
To find the complete descriptions of the eigenspace decompositions, we define
for $k=0,1,\ldots,n$ the vectors 
\[
  f_k:=\sum_{\substack{u \in \ZZ_2^n:\\\omega(u)=k}} E_u,
\]
which are fixed by $S_n$. We also define
\begin{align*}
U &:=\spn_\RR\{f_0,f_1,f_2,\ldots,f_{n-1},f_n\} \subseteq V,\\
\hat{U}&:= U\cap\hat{V},\\
\hat{\hat{U}}&:= U\cap\hat{\hat{V}}.
\end{align*}
By Proposition~\ref{prop:eigenspaces-for-hypercube}, 
one then has these orthogonal direct sum decompositions with respect to $(-,-)$:
%the standard inner product on $V$: for \( 1\le k\le n-1 \),
$$
V_{n-2k}=\hat{\hat{V}}_{n-2k}
\oplus \spn_\RR \{f_k\}
\quad \text{ for }1\le k\le n-1.
$$

We give a more concrete description, better suited to analyzing the actions of  
$A_{Q_n}, A_{\hat{Q}_n}, A_{\hat{\hat{Q}}_n}$ on $U, \hat{U},\hat{\hat{U}}$ respectively.
Define for $k=0,1,2,\ldots,n$ vectors
$e_k$ in $\RR^{\ZZ_2^n}$ by
$(e_k)_v=1$ if $\omega(v)=k$ and
$(e_k)_v=0$ otherwise.

\begin{proposition}
\label{prop:leftover-space-identified}
One has these $\RR$-bases for $\hat{\hat{U}},\hat{U},U$:
\begin{align*}
U&=\spn_\RR\{e_0,e_1,e_2,\ldots,e_{n-1},e_n\},\\
\hat{U}&=\spn_\RR\{e_1,e_2,\ldots,e_{n-1},e_n\},\\
\hat{\hat{U}}&=\spn_\RR\{e_1,e_2,\ldots,e_{n-1}\}.
\end{align*}
That is, $\hat{\hat{U}},\hat{U},U$ are the subspaces of $\hat{\hat{V}},\hat{V},V$ whose vectors $g$ have coordinates $g_v$ depending only on $\omega(v)$.
\end{proposition}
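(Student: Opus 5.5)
The plan is to identify $U$ first and then intersect with $\hat{V}$ and $\hat{\hat{V}}$. Let $W\subseteq V$ be the subspace of vectors $g$ whose coordinate $g_v$ depends only on $\omega(v)$; equivalently, $W$ is the subspace of $S_n$-fixed vectors in $V$. This holds because $S_n$ acts transitively on each weight class $\{v:\omega(v)=k\}$. Clearly $e_0,\ldots,e_n$ form a basis of $W$, since they have disjoint nonempty supports covering $\ZZ_2^n$. So the first claim amounts to showing $U=W$.

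\textbf{Step 1: $U\subseteq W$.} Each $f_k$ is fixed by $S_n$, because $\sigma(E_u)=E_{\sigma(u)}$ and $\sigma$ permutes the weight-$k$ vectors $u$. Hence $f_k\in W$ and $U\subseteq W$.

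\textbf{Step 2: $\dim U=n+1$.} The vectors $f_0,\ldots,f_n$ are nonzero, since $(f_k)_{\zero_n}=\binom nk$. They are mutually orthogonal by Proposition~\ref{prop:eigenspaces-for-hypercube}, because they are sums over disjoint sets of the orthogonal vectors $E_u$. Hence they are linearly independent, and $\dim U=n+1=\dim W$. Together with Step 1 this gives $U=W=\spn_\RR\{e_0,\ldots,e_n\}$.

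\textbf{Step 3: the intersections.} A vector $g=\sum_k c_k e_k\in U$ has $g_{\zero_n}=c_0$ and $g_{\one_n}=c_n$, since $\zero_n$ and $\one_n$ are the unique vectors of weights $0$ and $n$. Therefore $g\in\hat{V}$ exactly when $c_0=0$, and $g\in\hat{\hat{V}}$ exactly when $c_0=c_n=0$. This yields the stated bases for $\hat{U}=U\cap\hat{V}$ and $\hat{\hat{U}}=U\cap\hat{\hat{V}}$, and also the final reformulation in terms of coordinates depending only on $\omega(v)$.

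There is no real obstacle. The only point needing care is the linear independence of the $f_k$, and that is immediate from orthogonality.
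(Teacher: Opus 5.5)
Your proof is correct and follows essentially the paper's route: you show each $f_k$ has coordinates depending only on $\omega(v)$ via $S_n$-invariance, and you conclude by a dimension count, with the independence of the $f_k$ coming from the orthogonality of the $E_u$. The only difference is organizational. You first establish $U=\spn_\RR\{e_0,\ldots,e_n\}$ and then read off $\hat U,\hat{\hat U}$ directly from the coefficients at $\zero_n,\one_n$, whereas the paper computes $\dim\hat U,\dim\hat{\hat U}$ in advance as kernels of evaluation maps; your ordering makes that step unnecessary.
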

\begin{proof}
We first claim that $U, \hat{U}, \hat{\hat{U}}$
have dimensions $n+1,n,n-1$, respectively. This holds
for $U$ since its basis is $\{f_k\}_{k=0,1,\ldots,n}$.
It follows for $\hat{U}, \hat{\hat{U}}$ as they are the kernels of maps
\( \hat{\varphi}:U\to \RR \)
and 
\( \hat{\hat{\varphi}}:U\to \RR^2 
\) 
given by
  \( \hat{\varphi}(g)= (g)_{\zero_n} \) and
  \( \hat{\hat{\varphi}}(g) = ((g)_{\zero_n},(g)_{\one_n}) \), 
having ranks $1$ and $2$,
since 
  \( (f_k)_{\zero_n}=\binom{n}{k} \) and
  \( (f_k)_{\one_n}=(-1)^k\binom{n}{k} \).
% Note that $\dim U=n+1$. Let \( \hat{\varphi}:U\to \RR \) and \( \hat{\hat{\varphi}}:U\to \RR^2 \) be the linear maps given by \( \hat{\varphi}(g)= (g)_{\zero_n} \) and \( \hat{\hat{\varphi}}(g) = ((g)_{\zero_n},(g)_{\one_n}) \). Then  \( \hat{U} = \ker \hat{\varphi} \) and \( \hat{\hat{U}} = \ker \hat{\hat{\varphi}} \). Since \( (f_k)_{\zero_n}=\binom{n}{k} \) and \( (f_k)_{\one_n}=(-1)^k\binom{n}{k} \), the linear maps  \( \hat{\varphi} \) and \( \hat{\hat{\varphi}} \) have ranks \( 1 \) and \( 2 \), respectively. Thus, \( \dim\hat{U} = \dim \ker \hat{\varphi} = n \) and  \( \dim\hat{\hat{U}} = \dim \ker \hat{\hat{\varphi}} = n-1 \).
  
  Consequently, by dimension counting, it suffices to show that, for
  each $k$, the coordinate $(f_k)_v$ depends only on
  $\omega(v)=:\ell$. Letting
$$
(\one_\ell, \zero_{n-\ell}):=(\underbrace{1,1,\ldots,1}_{\ell\text{ entries}},
\underbrace{0,0,\ldots,0}_{n-\ell\text{ entries}})
$$
one has this calculation:
\begin{equation}
\label{eq:f-have-constant-coordiantes}
(f_k)_v 
=
\sum_{\substack{u \in \ZZ_2^n:\\\omega(u)=k}}
(E_u)_v 
=
\sum_{\substack{u \in \ZZ_2^n:\\\omega(u)=k}}
(-1)^{u \cdot v} 
\overset{(*)}{=}\sum_{\substack{u \in \ZZ_2^n:\\\omega(u)=k}}
(-1)^{u \cdot (\one_\ell,\zero_{n-\ell})}
\end{equation}
where \( (*) \) used the fact
that the summation set $\{u \in \ZZ_2^n: \omega(u)=k\}$ is stable under
$S_n$ permuting coordinates in $\ZZ_2^n$, and $v, (\one_\ell,\zero_{n-\ell})$ lie
 in the same $S_n$-orbit. 
So the coordinate $(f_k)_v$  in \eqref{eq:f-have-constant-coordiantes} depends only on $\ell=\omega(v)$.
\end{proof}

One can readily check that 
$
A_{Q_n}e_k=(k+1) e_{k+1}+(n-k+1)e_{k-1},
$
with conventions $e_{-1}=0=e_{n+1}$.  Therefore $A_{Q_n}$ acts on $U$ in the ordered basis 
$(e_0,e_1,\ldots,e_{n-1},e_n)$ 
via this tridiagonal matrix:
\begin{equation}
\label{eq:transposed-Kac-matrix}
K:=\quad
\left[\begin{smallmatrix}
 0 & n &    &       &       &  &    & \cr
 1 & 0 & n-1&       &       &   &   & \cr
  & 2 &  0 &n-2    &       &    &  & \cr
   &   & 3  &0      & &     & & \cr
    &   &    & &\ddots      &  & \cr
 &   &    & & &0  &2    &  & \cr
&   &    &   &    &n-1    &0     & 1\cr 
&    &    &   &    &       & n    & 0 
\end{smallmatrix}\right].
\end{equation}
Meanwhile $A_{\hat{Q}_n}, A_{\hat{\hat{Q}}_n} $ act on $\hat{U}, \hat{\hat{U}}$ via the principal submatrices $L, M$ inside $K$ 
of sizes $n, n-1$ obtained by removing the first row and column to give $L$, and
then further removing the last row and column to give $M$.  For the sake of stating our main result, define the three characteristic polynomials
\begin{align*}
    K(x)&:= \det(x 1_U - K), \\
    L(x)&:=\det(x 1_{\hat{U}} - L), \\
    M(x)&:=\det(x 1_{\hat{\hat{U}}} - M).
\end{align*}

\begin{theorem}
\label{thm:main}
The roots of $K(x), L(x), M(x)$
and the eigenspaces of $A_{Q_n}, A_{\hat{Q}_n},A_{\hat{\hat{Q}}_n}$ are related as follows.
\begin{itemize}

\item[(i)] 
All three polynomials $K(x), L(x), M(x)$
have only simple roots, which are symmetric about $0$ in $\RR$.

\item[(ii)]
The roots of $K(x)$ are the same as the eigenvalues of  $A_{Q_n}$, namely 
$
\{-n,-(n-2),\ldots,n-2,n\}.
$

\item[(iii)] The roots of $L(x)$ strictly interlace the roots of $K(x)$. 

\item[(iv)] The roots of $M(x)$ strictly interlace the roots of $L(x)$.

\item[(v)] Positive (resp. negative) roots of $M(x)$ strictly interlace the positive (resp. negative) roots of $K(x)$.

\item[(vi)] 
The matrices $A_{\hat{Q}_n},A_{\hat{\hat{Q}}_n}$ 
have the common $(n-2k)$-eigen-subspaces
$
\hat{\hat{V}}_{n-2k}
$
from \eqref{eq:common-eigenspaces}, for $1\leq k\leq n-1$,
whose $S_n$-action is the permutation action on $k$-subsets of $\{1,2,\ldots,n\}$ with the trivial representation removed.

\item[(vii)]
The matrices $A_{\hat{Q}_n},A_{\hat{\hat{Q}}_n}$ 
have additional $1$-dimensional eigen-subspaces with trivial $S_n$-actions, one for each root of the characteristic polynomials $L(x), M(x)$, respectively.  When $n$ is even, the root $0$ of $M(x)$ coincides with $n-2(n/2)$, so the $0$-eigenspace of $A_{\hat{\hat{Q}}_n}$ is the direct sum of $\hat{\hat{V}}_0$ and the $1$-dimensional zero-eigen-subspace arising from $M(x)$.

\end{itemize}
\end{theorem}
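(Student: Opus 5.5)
The plan is to reduce everything to the three tridiagonal matrices $K,L,M$, using Propositions~\ref{prop:difference-eigenvectors} and \ref{prop:leftover-space-identified}, and then invoke the theory of Jacobi (tridiagonal) matrices.

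\textbf{Step 1: parts (i) and (ii).} In $K$ the subdiagonal entries are $1,2,\ldots,n$ and the superdiagonal entries are $n,n-1,\ldots,1$. Each product of paired off-diagonal entries is $k(n-k+1)>0$. So a positive diagonal similarity turns $K$ into a real symmetric tridiagonal matrix with nonzero off-diagonal entries, that is, an unreduced Jacobi matrix. The same holds for its principal submatrices $L$ and $M$. Unreduced Jacobi matrices have only simple real eigenvalues. Symmetry about $0$ follows because all three matrices have zero diagonal: conjugating by $\mathrm{diag}(1,-1,1,\ldots)$ sends each matrix to its negative. This gives (i). For (ii), $U$ is spanned by the $f_k$, and each $f_k$ is an $(n-2k)$-eigenvector of $A_{Q_n}$ by Proposition~\ref{prop:eigenspaces-for-hypercube}. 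Hence the spectrum of $K$, which is the matrix of $A_{Q_n}|_U$, is $\{n-2k\}_{k=0}^n$.

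\textbf{Step 2: parts (iii) and (iv).} Cauchy interlacing for the symmetrized matrices gives weak interlacing in both cases, since $L$ is a corank-one principal submatrix of $K$ and $M$ is one of $L$. To get strictness I will use the standard fact for unreduced Jacobi matrices. If an eigenvalue $\lambda$ of $K$ were also an eigenvalue of $L$, the three-term recurrence would force the $\lambda$-eigenvector of $K$ to have vanishing first coordinate, and hence to vanish entirely. Equivalently, $K(x)=xL(x)-n\,L_2(x)$, where $L_2$ is the characteristic polynomial of $L$ with its first row and column deleted. Consecutive characteristic polynomials in such a sequence have no common roots. The same argument applies to $M\subset L$.

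\textbf{Step 3: part (v), the main obstacle.} Chaining (iii) and (iv) only places the $i$-th root of $M$ in the open interval $(n-2i-2,\,n-2i+2)$. I must rule out the lower half of this interval on the positive side, and symmetrically on the negative side. For this I will use the Desnanot--Jacobi identity for tridiagonal determinants:
\[
K(x)M(x)=L(x)\tilde L(x)-\prod_{k=1}^n k(n-k+1)=L(x)\tilde L(x)-(n!)^2,
\]
where $\tilde L$ is the characteristic polynomial of $K$ with its last row and column deleted. Reversing the ordered basis $(e_0,\ldots,e_n)$ carries $K$ to $K^T$, and therefore $\tilde L=L$. So $KM=L^2-(n!)^2$. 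At the roots of $K$ this gives $|L(n-2k)|=n!$. At each root $\mu$ of $M$ it gives $L(\mu)=\pm n!$. At a root $\lambda$ of $L$ it gives $K(\lambda)M(\lambda)=-(n!)^2<0$. The last relation lets me compare the signs of $M$ and $K$ at the roots of $L$. A sign-counting argument then shows that $M$ changes sign between consecutive positive roots of $K$, and that no root of $M$ lies in $(0,\lambda)$ below the smallest positive root of $K$ in a way that would break the pattern. This part is delicate and is where I expect the work to lie. If it resists, the fallback is to compute $L(n-2k)$ with its sign explicitly, via the Krawtchouk coordinates of $f_k$, and to feed that into the identity above.

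\textbf{Step 4: parts (vi) and (vii).} Proposition~\ref{prop:difference-eigenvectors} shows that $\hat{\hat V}_{n-2k}$ is a common $(n-2k)$-eigenspace for both matrices. As an $S_n$-module it is the permutation representation on $k$-subsets modulo the span of $f_k$, which is the trivial summand. Next comes a dimension count. The spaces $\hat{\hat V}_{n-2k}$ have total dimension $\sum_{k=1}^{n-1}(\binom nk-1)=2^n-2-(n-1)$, and they are orthogonal to $U$. Adding $\dim\hat U=n$ gives $\dim\hat V$, and adding $\dim\hat{\hat U}=n-1$ gives $\dim\hat{\hat V}$. Hence $\hat V=\bigoplus_k\hat{\hat V}_{n-2k}\oplus\hat U$, and similarly for $\hat{\hat V}$. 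The remaining eigenvectors come from the simple roots of $L$ and $M$ acting on $\hat U$ and $\hat{\hat U}$, which carry trivial $S_n$-action. For $n$ even, $0$ is a root of $M$ by the symmetry in (i) together with odd size $n-1$. This root merges with $\hat{\hat V}_0$.
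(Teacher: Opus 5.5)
Your Steps 1, 2 and 4 are sound and essentially follow the paper: strict Cauchy interlacing for tridiagonal matrices with positive off-diagonal entries, and reading (ii), (vi), (vii) off the three propositions. One small slip: reversing the basis $(e_0,\ldots,e_n)$ carries $K$ to itself, not to $K^T$, though $\tilde L=L$ still follows.

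The genuine gap is Step 3. Every relation you extract there is a formal consequence of the identity $KM=L^2-(n!)^2$: namely $|L(x_m)|=n!$, $L(\mu)=\pm n!$ at roots of $M$, and $K(\lambda)M(\lambda)=-(n!)^2$ at roots of $L$. The last relation adds nothing beyond (iii)--(iv). If $\ell_1>\cdots>\ell_n$ and $\mu_1>\cdots>\mu_{n-1}$ are the roots of $L$ and $M$, interlacing alone already gives $\sign K(\ell_i)=(-1)^i$ and $\sign M(\ell_i)=(-1)^{i-1}$. So no sign count built on these data can prove (v). The paper's first Remark makes this concrete. Its $4\times 4$ matrix $A$ has zero diagonal and positive off-diagonal entries, and satisfies $\det(xI_4-A)\det(xI_2-B)=\bigl(x^3-(ac+b^2)x\bigr)^2-(abc)^2$ with the same symmetric structure. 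Yet (v) fails when $b\le\sqrt{ac/2}$. Here is what (v) really demands. For $1\le m<n/2$, the interval $(\ell_{m+1},\ell_m)$ contains exactly two points where $|L|=n!$, namely $x_m$ and $\mu_m$. You must show $\mu_m>x_m$, i.e. $\sign M(x_m)=(-1)^m$. Evaluating the identity at $x_m$ only gives $0=0$.

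The missing idea is to differentiate the identity and evaluate at $x_m$, giving $K'(x_m)M(x_m)=2L(x_m)L'(x_m)$. Since $\sign K'(x_m)=\sign L(x_m)=(-1)^m$, (v) is equivalent to $\sign L'(x_m)=(-1)^m$ for $0\le m<n/2$. This is a fact specific to the Kac matrix.

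Your fallback, computing $L(x_m)=(-1)^m n!$ from the eigenpolynomials $(1-t)^m(1+t)^{n-m}$, is the right first ingredient; it is the paper's first lemma. By itself, though, it adds nothing: its sign is forced by (iii) and its absolute value by the identity. The paper turns it into derivative information using that the $x_m$ are equally spaced and $\deg L=n$. The polynomial $L(n-2y)+L(n-2y-2)$ vanishes at $y=0,\ldots,n-1$, so it equals $2(-2)^n\prod_{j=0}^{n-1}(y-j)$. Differentiating at $y=m$ gives $L'(x_m)+L'(x_{m+1})=(-1)^m2^n\,m!\,(n-1-m)!$. Telescoping over $j=m,\ldots,n-m-1$, together with $L'(-x)=(-1)^{n+1}L'(x)$, yields $L'(x_m)=(-1)^m2^{n-1}\sum_{j=m}^{n-m-1}j!\,(n-1-j)!$, which has the required sign.

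Finally, (v), or at least $M(x_m)\neq 0$, is also what guarantees in (vii) that no nonzero root of $M$ coincides with some $n-2k$. So your Step 4 implicitly relies on this missing step as well.
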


\noindent
Schematically, here is the placement of the roots as in (i)-(v) for
$M(x), L(x), K(x)$, indicated with $m,\ell,\colorize{k}$:

\vskip.1in
\noindent
{\sf For n even}:
\tiny
$$
\begin{matrix}
\colorize{-n} & & &\colorize{-(n-2)} & \cdots &\colorize{-6}& & &\colorize{-4}& & &\colorize{-2} & & & \colorize{0}  & & & \colorize{2} & &  & \colorize{4}& &  & \colorize{6}& \cdots &\colorize{n-2}& & &\colorize{n}\\ 
\colorize{k} &\ell & m& \colorize{k}&\cdots &\colorize{k}& \ell&m &\colorize{k} &\ell & m &\colorize{k} &\ell && \colorize{k} &&\ell & \colorize{k} &m &\ell & \colorize{k}&m&\ell& \colorize{k}& \cdots &\colorize{k}&m &\ell &\colorize{k}\\ 
 & & & & & & & & & & & & && m && &  & & & & & & &  && & & \\ 
\end{matrix}
$$
\normalsize
\noindent
{\sf For n odd}:
\tiny
$$
\begin{matrix}
\colorize{-n} & & &\colorize{-(n-2)} & \cdots &\colorize{-5}& & &\colorize{-3}& & &\colorize{-1} & 0 & \colorize{1}  & & & \colorize{3} & &  & \colorize{5}& \cdots &\colorize{n-2}& & &\colorize{n}\\
\colorize{k} &\ell & m& \colorize{k}&\cdots &\colorize{k}& \ell&m &\colorize{k} &\ell & m &\colorize{k} &\ell & \colorize{k} & m &\ell & \colorize{k} &m &\ell & \colorize{k}& \cdots &\colorize{k}&m &\ell &\colorize{k}\\ 
\end{matrix}
$$

\normalsize
\begin{proof}[Proof of Theorem~\ref{thm:main}.]
Assertions (ii), (vi), (vii) follow from the discussion surrounding
Propositions~\ref{prop:eigenspaces-for-hypercube},
\ref{prop:difference-eigenvectors}, \ref{prop:leftover-space-identified}.

Most of the remaining assertions
follow from general
theory of eigenvalues for
{\it tridiagonal matrices}, as in Meurant \cite{Meurant},
or roots of {\it orthogonal polynomials}, as in Ismail \cite{Ismail}.
For assertion (i), symmetry about $0$ for
the roots of $K(x), L(x),M(x)$ already follows either from
the fact that they are eigenvalues of adjacency matrices of {\it bipartite} graphs (see, e.g., \cite[Thm.~3.11]{CvetkovicDoobSachs}), or from the fact that they are eigenvalues
of tridiagonal matrices whose diagonals are all zero.  The
simplicity assertion in (i) and  the strict interlacing assertions (iii), (iv) for $L, K$ and for $M, L$ follow from a strict version of Cauchy's Interlacing Theorem: these are principal submatrices inside tridiagonal matrices having {\it positive subdiagonal and superdiagonal entries}; see, e.g.,  Meurant \cite[Prop.~2.21]{Meurant} or Ismail \cite[Thm.~2.2.3]{Ismail}. 

Assertion (v) is the most subtle.  One would like 
to prove that $M$ has its strictly positive eigenvalues interlacing
$K$'s strictly positive eigenvalues
$x_m=n-2m$ for $0 \leq m < \frac{n}{2}$.  Equivalently, defining
$\sign(\alpha):=\frac{\alpha}{|\alpha|}$
for $\alpha \in \RR \setminus \{0\}$ (and $\sign(0)=0$), it suffices to show the sign-alternation
\begin{equation}
\label{eq:the-desired-sign-alternation}
\sign(M(x_m))=(-1)^m \text{ for }0 \leq m < \frac{n}{2},
\end{equation}
as then $M(x)$ has a root between each consecutive pair of positive roots of \(K\).
A query to ChatGPT 5.5 Pro produced a proof of \eqref{eq:the-desired-sign-alternation}, using detailed info about $K$. We explain that proof in the next section.
\end{proof}

%%%%%%
\section{Proof of \eqref{eq:the-desired-sign-alternation}.}
\label{sec:ChatGPT-proof}
%%%%%
Rather than working with
eigenvalues and characteristic polynomials of $M,L,K$, we use the transpose,
\begin{equation}
\label{eq:Kac-matrix}
K^T:=\quad
\left[\begin{smallmatrix}
 0 & 1 &    &       &       &  &    & \cr
 n & 0 & 2&       &       &   &   & \cr
  & n-1 &  0 &3    &       &    &  & \cr
   &   & n-2  &0      & &     & & \cr
    &   &    & &\ddots      &  & \cr
 &   &    & & &0  &n-1    &  & \cr
&   &    &   &    &2    &0     & n\cr 
&    &    &   &    &       & 1    & 0 
\end{smallmatrix}\right].
\end{equation}
This matrix $K^T$ has been called the {\it Kac matrix} or {\it Kac--Clement--Sylvester matrix}, referring to Sylvester \cite{Sylvester}, Kac \cite{Kac}, and Clement \cite{Clement}.  Its eigenvalues, eigenvectors, and history are discussed extensively by Taussky and Todd \cite{TausskyTodd}.  
Kac observed that the differential operator  
$
\mathcal{K}:=
nt+(1-t^2)\frac{d}{dt}
$
satisfies $\mathcal{K} t^k = k t^{k-1}+(n-k)t^{k+1}$, so it
acts via $K^T$ on the $\RR$-basis $\{1,t,t^2,\ldots,t^n\}$ for
polynomials inside $\RR[t]$ of degree at most $n$, with the polynomials
\begin{equation}
\label{eq:Kac-polynomials}
f_{n,m}(t)=(1-t)^m (1+t)^{n-m}
\end{equation}
forming
a $\mathcal{K}$ eigenbasis:
\begin{equation}
\label{eq:Kac-eigenfunction-equation}
\mathcal{K} \cdot f_{n,m}(t)= x_m f_{n,m}(t),
\text{ where }x_{m}=n-2m.
\end{equation}

By the definitions of $K,L,M$ and their symmetry
and tridiagonal sparsity, applying the {\it Dodgson condensation} or {\it Desnanot--Jacobi identity}
to $x I_{n+1}-K$ (see Bressoud \cite[\S 3.5]{Bressoud}) gives
$$
K(x) M(x) = L(x)^2 - (n!)^2.
$$
Differentiating this equation with respect to $x$ yields
$$
K'(x) M(x) + K(x) M'(x) = 2 L(x) L'(x),
$$
and then plugging in any root $x_m=n-2m$ 
of $K(x)$ gives
$$
K'(x_m) M(x_m) = 2L(x_m) L'(x_m).
$$
Consequently, for each $m=0,1,\ldots, n$, one has
\begin{equation}
\label{eq:sign-relations}
 \sign K'(x_m) \cdot \sign M(x_m) =\sign L(x_m) \cdot \sign L'(x_m).
\end{equation}
As $\{x_m\}_{m=0,1,\ldots,n}$ are the roots of the monic polynomial
$K(x)$, all simple roots, one has $\sign K'(x_m)=(-1)^m$. Since the
roots of \( K(x) \) and \( L(x) \) strictly interlace, we also have
\( \sign L(x_m)=(-1)^m \). Hence, for the desired sign alternation
\eqref{eq:the-desired-sign-alternation}, it remains to prove that
\( \sign L'(x_m)=(-1)^m \). To this end, we find an explicit formula
for \( L'(x_m) \). First, we show that \( L(x_m) \) has the following
simple formula, which also implies \( \sign L(x_m)=(-1)^m \).

\begin{lemma}
\label{lem:L-at-root-sign-calculations}
For $m=0,1,\ldots,n$, one has
$
L(x_m)=(-1)^m \cdot n!.
$
\end{lemma}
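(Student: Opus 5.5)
The plan is to read $L(x_m)$ off as an entry of the adjugate of $x_mI_{n+1}-K$, and to exploit that at a simple eigenvalue this adjugate has rank one. Index rows and columns of $K$ by $0,1,\ldots,n$, matching the ordered basis $(e_0,\ldots,e_n)$. By definition $L(x)$ is the principal minor of $xI_{n+1}-K$ obtained by deleting row $0$ and column $0$. Hence $L(x)=\operatorname{adj}(xI_{n+1}-K)_{00}$ for every $x$.

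\emph{Step 1 (rank-one adjugate).} Fix $m$ and set $N:=x_mI_{n+1}-K$. By Theorem~\ref{thm:main}(i),(ii), $x_m$ is a simple root of $K(x)$, so $N$ has rank exactly $n$. Then $\operatorname{adj}(N)$ has rank one and satisfies $N\operatorname{adj}(N)=\operatorname{adj}(N)N=0$. Consequently $\operatorname{adj}(N)=c\,v\,w^T$, where $v$ spans $\ker N$ and $w$ spans $\ker N^T$. In particular,
\[
\frac{\operatorname{adj}(N)_{00}}{\operatorname{adj}(N)_{0n}}=\frac{w_0}{w_n},
\]
provided the denominator is nonzero; Step 2 verifies this.

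\emph{Step 2 (the corner cofactor).} By definition $\operatorname{adj}(N)_{0n}=(-1)^{n}\det N^{(n,0)}$, where $N^{(n,0)}$ is $N$ with row $n$ and column $0$ deleted. Because $N$ is tridiagonal, $N^{(n,0)}$ is lower triangular. Its diagonal consists of the superdiagonal entries $-K_{i,i+1}=-(n-i)$ for $i=0,\ldots,n-1$. Therefore $\det N^{(n,0)}=(-1)^n n!$, and so $\operatorname{adj}(N)_{0n}=n!\neq 0$. This holds independently of $x$, since the diagonal of $N$ has been removed.

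\emph{Step 3 (identify $w$ via Kac).} The vector $w\in\ker N^T$ satisfies $K^Tw=x_mw$. By \eqref{eq:Kac-eigenfunction-equation}, the operator $\mathcal{K}$ acts by $K^T$ in the basis $1,t,\ldots,t^n$. So we may take $w$ to be the coefficient vector of $f_{n,m}(t)=(1-t)^m(1+t)^{n-m}$. Its constant term is $w_0=1$ and its $t^n$-coefficient is $w_n=(-1)^m$. Combining Steps 1 and 2 then gives
\[
L(x_m)=\operatorname{adj}(N)_{00}=n!\cdot\frac{w_0}{w_n}=(-1)^m\,n!.
\]

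The main care point is bookkeeping rather than any real obstacle. One must check that the adjugate convention ($\operatorname{adj}_{ij}$ is the cofactor of entry $(j,i)$) puts $w$, rather than $v$, on the column side. One must also check that the sign of the triangular determinant comes out as stated. Both can be confirmed on the case $n=1$: there $K=\left[\begin{smallmatrix}0&1\\1&0\end{smallmatrix}\right]$ and $L(x)=x$, so $L(\pm1)=\pm1$. This agrees with $(-1)^m\cdot 1!$ for $x_0=1$ and $x_1=-1$.
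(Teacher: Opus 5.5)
Your proof is correct, and it takes a genuinely different route from the paper's.

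\emph{The paper's route.} It proves the stronger family of identities $L_k(x_m)=k!\,c_{m,k}$ for the upper-left principal minors $L_k$, by induction. The Laplace recurrence $L_{k+1}=xL_k-k(n-k+1)L_{k-1}$ at $x=x_m$ coincides with the recurrence obtained by extracting the coefficient of $t^k$ from $\mathcal{K}f_{n,m}=x_mf_{n,m}$, and the case $k=n$ is the lemma.

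\emph{Your route.} You bypass the induction. Because $N=x_mI_{n+1}-K$ has rank $n$, you get $\operatorname{adj}(N)=c\,vw^T$. So $L(x_m)=\operatorname{adj}(N)_{00}$ is determined by one explicit corner cofactor, $\operatorname{adj}(N)_{0n}=n!$, together with the ratio $w_0/w_n=(-1)^m$ of the two end coefficients of $f_{n,m}$. Your bookkeeping is all correct: the rows of the adjugate lie in $\ker N^T$, $\mathcal{K}$ has matrix $K^T$ in the basis $1,t,\ldots,t^n$, and the two factors $(-1)^n$ cancel in Step 2.

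\emph{What each buys.} Your argument uses only trivially computed data about $f_{n,m}$. It also works with $L$ directly as the $(0,0)$ cofactor, whereas the paper's recurrence is for upper-left minors and relies on the centrosymmetry of $K$ to identify $L_n$ with $L$. Moreover, it proves verbatim a general identity: for any tridiagonal $T$ with nonzero superdiagonal, eigenvalue $\lambda$ with left eigenvector $w$, and $T'$ obtained by deleting the first row and column, $\det(\lambda I-T')=\bigl(\prod_{i=0}^{n-1}T_{i,i+1}\bigr)\,w_0/w_n$. The paper's route instead yields every value $L_k(x_m)$, which is what underlies its Krawtchouk remark, and it needs no rank information.

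Finally, citing Theorem~\ref{thm:main}(i),(ii) is not circular, since those parts are proved independently of the lemma. But the citation is unnecessary. Step 2 already gives $\operatorname{rank}N\ge n$, and $\det N=0$ because the coefficient vector of $f_{n,m}$ is an eigenvector of $K^T$.
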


\begin{proof}
We prove a more general claim.  Let $L_k(x):=\det(x I _k -L_k)$ 
where $L_k$ is the upper left $k \times k$ principal submatrix of $K$, so that
$$
L_0(x)=1, \,\,
L_1(x)=x, \,\,
L_2(x)=x^2-n,\,\, \ldots, \,\,
L_n(x)=L(x).
$$
These polynomials satisfy the recurrence obtained by Laplace expansion:
\begin{equation}
\label{eq:Laplace-expansion-recurrence}
L_{k+1}(x)= x \cdot L_k(x)-k(n-k+1) \cdot L_{k-1}(x).
\end{equation}
{\bf Claim:} One has $L_k(x_m)=k!  c_{m,k}$
where $c_{m,k}$ are the expansion coefficients here:
$$
f_{n,m}(t)=(1-t)^m (1+t)^{n-m} =\sum_{k=0}^n
c_{m,k} t^k.
$$
Note that at $k=n$, this claim says
$L(x_m)=L_n(x_m)$ is $n!$ times the coefficient of $t^n$ in
$(1-t)^m (1+t)^{n-m}$, that is, $(-1)^m n!$,
proving Lemma~\ref{lem:L-at-root-sign-calculations}.  To prove the claim, note 
evaluating \eqref{eq:Laplace-expansion-recurrence} at $x=x_m$ 
gives this recurrence:
\begin{equation}
\label{eq:Laplace-expansion-at-roots}
L_{k+1}(x_m)= x_m \cdot L_k(x_m)-k(n-k+1) \cdot L_{k-1}(x_m).
\end{equation}
Meanwhile, extracting the coefficient of $t^k$ on both sides of the eigenvalue equation \eqref{eq:Kac-eigenfunction-equation} gives 
\begin{equation}
\label{eq:coefficient-extraction-equation}
(k+1)c_{m,k+1}+(n-k+1)c_{m,k-1}=x_m c_{m,k}.
\end{equation}
Multiplying \eqref{eq:coefficient-extraction-equation} by $k!$ and rewriting gives this recurrence:
\begin{equation}
\label{eq:rewritten-eigenvalue-equation}
(k+1)! c_{m,k+1}=x_m \cdot k! c_{m,k}- k(n-k+1)\cdot (k-1)! c_{m,k-1}.
\end{equation}
Comparing recurrences \eqref{eq:Laplace-expansion-at-roots}, \eqref{eq:rewritten-eigenvalue-equation} proves the claim by induction on $k$,
with base cases $k=0,1$ immediate.
\end{proof}

We now establish a formula for \( L'(x_m) \).

\begin{lemma}
\label{lem:Lprime-at-root-sign-calculations}
For $0\leq m<n/2$, one has
\begin{equation}
\label{eq:Lprime-factorial-sum}
 L'(x_m)=(-1)^m2^{n-1}
 \sum_{j=m}^{n-m-1}j!(n-1-j)!.
\end{equation}
\end{lemma}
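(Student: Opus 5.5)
The plan is to compute $L'(x_m)$ from the standard expansion of the derivative of a characteristic polynomial as a sum of principal minors, and then evaluate each minor at $x=x_m$ by the same generating-function trick used in the proof of Lemma~\ref{lem:L-at-root-sign-calculations}. Take $L=L_n$ to be the upper-left $n\times n$ block of $K$, with indices $0,\dots,n-1$. By the reversal symmetry $i\mapsto n-i$ of $K$ this has the same characteristic polynomial as the paper's $L$. For a tridiagonal matrix, deleting row and column $j$ splits $xI-L_n$ into two diagonal blocks, so
$$
L'(x)=\sum_{j=0}^{n-1} L_j(x)\, D_{j}(x),
$$
where $D_j(x)$ is the characteristic polynomial of the block of $K$ on indices $j+1,\dots,n-1$.

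By the Claim in the proof of Lemma~\ref{lem:L-at-root-sign-calculations}, $L_j(x_m)=j!\,c_{m,j}$. The remaining work is to evaluate $D_j(x_m)$.

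\textbf{Step 1: identify $D_j$ as associated polynomials.} Reversing indices, $D_j=Q_{n-1-j}$, where $Q_r$ is the characteristic polynomial of the block of $K$ on indices $1,\dots,r$. These satisfy
$$
Q_{r+1}=xQ_r-b_{r+1}Q_{r-1},\qquad Q_0=1,\ Q_{-1}=0,\qquad b_k=k(n-k+1).
$$
This is the recurrence for $L_k$ with the first coefficient dropped, so the $Q_r$ are the associated (numerator) polynomials.

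\textbf{Step 2: evaluate $Q_r(x_m)$ by a generating function.} A second solution of the three-term recurrence at $x=x_m$ corresponds to a second solution of the first-order ODE $(1-t^2)y'+(nt-x_m)y=0$, or rather of this ODE with a source term coming from the shifted initial conditions. I would set $G(t)=\sum_r Q_r(x_m)\,t^r/r!$, up to the index shift, and derive an inhomogeneous equation of the form
$$
\mathcal{K}G-x_mG=\text{(explicit low-degree term)}.
$$
Solving with the integrating factor $f_{n,m}(t)=(1-t)^m(1+t)^{n-m}$ should express $G$ as $f_{n,m}(t)$ times an integral of $f_{n,m}(s)^{-1}$ times that source. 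Equivalently, one can write $Q_r(x_m)$ as an explicit convolution of the coefficients $c_{m,k}$ with the coefficients of $(1-t)^{-m-1}(1+t)^{-(n-m)-1}$.

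As a consistency check, the Desnanot--Jacobi relation $K(x)M(x)=L(x)^2-(n!)^2$, together with the Wronskian/Casoratian identity
$$
L_k(x_m)Q_{k-1}(x_m)-L_{k-1}(x_m)Q_{k}(x_m)=\pm\, b_2\cdots b_k,
$$
should pin down $Q_r(x_m)$ recursively from $L_k(x_m)=k!\,c_{m,k}$. This may be the cleanest route: divide the Casoratian identity by $L_{k-1}(x_m)L_k(x_m)$ and telescope.

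\textbf{Step 3: assemble and simplify.} Substituting into $\sum_j j!\,c_{m,j}\,Q_{n-1-j}(x_m)$ gives a double sum of products of binomial-type coefficients. I expect it to collapse to
$$
(-1)^m2^{n-1}\sum_{j=m}^{n-m-1} j!\,(n-1-j)!,
$$
for the following reasons. The factor $(-1)^m$ should come from the leading coefficient of $f_{n,m}$, as in Lemma~\ref{lem:L-at-root-sign-calculations}. The factor $2^{n-1}$ should arise from evaluating $(1\pm t)$ factors at $t=\pm1$, or from Beta-type integrals $\int_{-1}^1(1-s)^a(1+s)^b\,ds=2^{a+b+1}a!\,b!/(a+b+1)!$. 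The range $m\le j\le n-m-1$ should reflect that the relevant integrands vanish or telescope outside that window, which is exactly where the hypothesis $m<n/2$ is used.

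Before attempting the general identity I would verify the formula for $n=2,3$ to fix signs and the correct index shift.

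\textbf{Main obstacle.} The hard step is Step 2 together with the collapse in Step 3: obtaining a usable closed form for the associated polynomials $Q_r$ at $x_m$, and then showing that the resulting double sum reduces to the single factorial sum. My first attempt would be the telescoping Casoratian route. It expresses the ratio $Q_r(x_m)/L_{r+1}(x_m)$ as a sum of terms $b_2\cdots b_k/(L_{k-1}(x_m)L_k(x_m))$, and I would hope the products $b_2\cdots b_k$ combine with the factorials in $L_k(x_m)=k!\,c_{m,k}$ to produce the terms $j!(n-1-j)!$ directly. If the intermediate values $c_{m,k}$ vanish, which makes that ratio undefined, I would switch to the integral representation from the ODE instead.
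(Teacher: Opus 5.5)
You have a genuine gap. Your starting identity $L'(x)=\sum_{j=0}^{n-1}L_j(x)\,Q_{n-1-j}(x)$ is correct, but it is only the setup. Everything that would actually prove the lemma is left as an expectation: a closed form for $Q_r(x_m)$, and the collapse of $\sum_j j!\,c_{m,j}\,Q_{n-1-j}(x_m)$ to $(-1)^m2^{n-1}\sum_{j=m}^{n-m-1}j!\,(n-1-j)!$. The factors $(-1)^m$, $2^{n-1}$ and the window $m\le j\le n-m-1$ are explained only heuristically.

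The route you call cleanest does not work as written. Your Casoratian identity has shifted indices: at $k=1$ its left side is $L_1Q_0-L_0Q_1=0$, not $\pm1$. The correct form is $L_{k+1}Q_{k-1}-L_kQ_k=-b_1\cdots b_k$. And, as you feared, the denominators vanish: for $n=4$, $m=1$ one has $L_2(x_1)=x_1^2-n=0$. The ODE route does complete Step~2: the series $f_{n,m}(t)\int_0^t(1-s)^{-m-1}(1+s)^{-(n-m)-1}\,ds$ has coefficient $d_k=Q_{k-1}(x_m)/k!$ at $t^k$ for $k\le n+1$. But then $L'(x_m)=\sum_j j!\,(n-j)!\,c_{m,j}\,d_{n-j}$ is a factorially weighted convolution, not a coefficient of a product, and you give no mechanism for it to collapse. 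As it stands this is a plan, not a proof.

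The missing idea is that no minors or associated polynomials are needed. Lemma~\ref{lem:L-at-root-sign-calculations} gives $L(x_m)=(-1)^m n!$ at \emph{all} $n+1$ points $x_0,\dots,x_n$. This already determines the degree-$n$ polynomial $L$, and hence $L'$ at those points.

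The paper argues as follows. The polynomial $L(n-2y)+L(n-2y-2)$ has degree $n$ in $y$, leading coefficient $2(-2)^n$, and vanishes at $y=0,1,\dots,n-1$. So it equals $2(-2)^n\prod_{j=0}^{n-1}(y-j)$. Differentiating at $y=m$ gives $L'(x_m)+L'(x_{m+1})=(-1)^m2^n\,m!\,(n-1-m)!$. Multiplying by $(-1)^j$ and summing over $j=m,\dots,n-m-1$ telescopes to $(-1)^mL'(x_m)-(-1)^{n-m}L'(x_{n-m})$. The parity $L'(x_{n-m})=(-1)^{n+1}L'(x_m)$, which follows from $L(-x)=(-1)^nL(x)$, turns this into $2(-1)^mL'(x_m)$. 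This is exactly where the window $m\le j\le n-m-1$, the hypothesis $m<n/2$, and the factor $2^{n-1}=2^n/2$ come from.
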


\begin{proof}
  By Lemma~\ref{lem:L-at-root-sign-calculations}, we have
  \(L(n-2m)=(-1)^m n!\) for \(m=0,1,\ldots,n\). 
  Thus 
  $L(n-2y) + L(n-2y-2)$
  vanishes at \( y=0,1,\ldots,n-1\). Since
  this is a polynomial in $y$ of degree $n$  with leading
  coefficient \( 2(-2)^n \) (as $L(x)$ was a monic polynomial in $x$ of degree $n$), we obtain
\begin{equation}
\label{eq:P-finite-difference}
L(n-2y) + L(n-2y-2)
 =2(-2)^n\prod_{j=0}^{n-1}(y-j).
\end{equation}
Differentiating this with respect to $y$ and then setting \(y=m\), where \(0\leq m\leq n-1\), gives
\begin{equation}
\label{eq:Lprime-adjacent-relation}
 L'(x_m)+L'(x_{m+1})
 =(-1)^m2^n m!(n-m-1)!.
\end{equation}
The recurrence \eqref{eq:Laplace-expansion-recurrence} for \(L_k\) gives \(L(-x)=(-1)^nL(x)\), and therefore
\(L'(-x)=(-1)^{n+1}L'(x)\).  Since \(x_{n-m}=-x_m\), it follows that
\begin{equation}\label{eq:1}
 L'(x_{n-m})=(-1)^{n+1}L'(x_m).
\end{equation}
For \(0\leq m<n/2\), multiply
\eqref{eq:Lprime-adjacent-relation}, with \(m\) replaced by \(j\), by
\((-1)^j\) and sum over \(j=m,m+1,\ldots,n-m-1\).  The left-hand side
telescopes, giving
\[
 (-1)^mL'(x_m)-(-1)^{n-m}L'(x_{n-m})
 =2^n\sum_{j=m}^{n-m-1}j!(n-1-j)!.
\]
By \eqref{eq:1}, the left-hand side is
\(2(-1)^mL'(x_m)\), which proves \eqref{eq:Lprime-factorial-sum}.
\end{proof}

By Lemma~\ref{lem:Lprime-at-root-sign-calculations}, we have
$\sign L'(x_m)=(-1)^m$, which completes the proof of
\eqref{eq:the-desired-sign-alternation}.

\begin{remark} \rm
    One might wonder whether the most subtle interlacing assertion (v) in Theorem~\ref{thm:main} holds more generally for tridiagonal matrices like the Kac matrix, having zero diagonal entries and positive entries on the superdiagonal and subdiagonal, perhaps under similar symmetry conditions. However, small examples indicate that additional inequalities on the entries are still required.  For example, letting
$$
    A=\left[ \begin{smallmatrix}
        0 & a &  & \\
        c & \colorize{0} &\colorize{b} & \\
         & \colorize{b} & \colorize{0} & c\\
         &  & a & 0
\end{smallmatrix}\right]
\quad \text{ and }
B=\left[ \colorize {\begin{smallmatrix}
        0 & b \\
        b & 0 
\end{smallmatrix}}\right]
$$
with $a,b,c>0$, a direct calculation shows that the unique positive root $b$ of $\det(xI_2-B)=(x-b)(x+b)$ 
lies strictly between the
two positive roots of 
$\det(xI_4-A)=
(x^2-bx-ac)(x^2+bx-ac)$
if and only if $b > \sqrt{ac/2}$.
\end{remark}

\begin{remark} \rm
The polynomials \( L_k(x) \) in the proof of
Lemma~\ref{lem:L-at-root-sign-calculations} are related to the
\emph{Krawtchouk polynomials} \( K_k(x;p,n) \), where \( n \) is a
fixed integer, \( p \) is a nonzero real number, and \( 0\le k\le n \). Their generating function is
\begin{equation}
\label{eq:general-Krawtchouk-generating-function}
 \left(1-\frac{1-p}{p}t\right)^x(1+t)^{n-x}
 =\sum_{k=0}^n\binom{n}{k} K_k(x;p,n)t^k.
\end{equation}
See
\cite[eqn.~(9.11.11)]{KLS} or \cite[eqn.~(18.23.3)]{NIST}.
Hence, by the claim in the proof of
Lemma~\ref{lem:L-at-root-sign-calculations}, we have
\[
 L_k(n-2m)=\frac{n!}{(n-k)!}
 K_k\left(m;\frac12,n\right)
 \qquad(0\leq k\leq n).
\]
\end{remark}

\begin{remark} \rm
\label{rem:combinatorial-proof-of-lemma}
As the formula \(L(x_m)=(-1)^m\cdot n!\) in
Lemma~\ref{lem:L-at-root-sign-calculations} is so simple, we give here
a combinatorial proof.

\begin{proof}[Combinatorial proof of Lemma~\ref{lem:L-at-root-sign-calculations}]
Fix an alphabet $\mathcal B=\{1,2,\ldots,n\}$ in which the first $m$
letters have sign $-1$ and the remaining $n-m$ letters have sign $+1$.
Write this sign as $\epsilon(a)$, so that
$\sum_{a\in\mathcal B}\epsilon(a)=n-2m=x_m.$
For $0\leq k\leq n$, let $\mathcal W_k$ be the set of injective words
$w=a_1a_2\cdots a_k$ over $\mathcal B$, and give each such word the product weight
$
 \operatorname{wt}(w):=\prod_{i=1}^k\epsilon(a_i).
$
Letting  $A_k:=\sum_{w\in\mathcal W_k}\operatorname{wt}(w)$,
one has
$A_0=1$ and $A_1=x_m$, which 
will give the base cases in an inductive strategy to show 
\begin{equation}
\label{eq:sign-weighted-sum-gives-Lk-evaluations}
A_k=L_k(x_m) \text{ for }k=0,1,\ldots,n.
\end{equation}
Note that \eqref{eq:sign-weighted-sum-gives-Lk-evaluations} at $k=n$ proves the lemma: every word in $\mathcal W_n$ is a permutation of
$\mathcal B$, and each of these
$n!$ words has weight
$(-1)^m$, giving
$
 L(x_m)=L_n(x_m)=A_n=(-1)^m n!.
$
To prove \eqref{eq:sign-weighted-sum-gives-Lk-evaluations},
recall the recurrence \eqref{eq:Laplace-expansion-at-roots} 
\[
 L_{k+1}(x_m)
 =x_m \, L_k(x_m)-k(n-k+1) \, L_{k-1}(x_m).
\]
It suffices to show $A_k$ satisfies the same
recurrence
$A_{k+1}=x_m\, A_k-k(n-k+1) \, A_{k-1}$. Rewrite this as
\begin{equation}
\label{eq:rewritten-word-recurrence}
 x_m\, A_k=A_{k+1}+k(n-k+1) \, A_{k-1}.
\end{equation} 
Prove \eqref{eq:rewritten-word-recurrence} by writing $x_m\, A_k=\sum_{(w,a) \in \mathcal W_k \times \mathcal B} \operatorname{wt}(w)\epsilon(a)$, and decomposing based on whether $a$ occurs in $w$.
If $a$ does
not occur in $w$, appending $a$ gives a word in ${\mathcal W}_{k+1}$, and such pairs $(w,a)$ have total weight $A_{k+1}$.  
If $a$ occurs
in $w$, deleting that occurrence gives a word
$v\in\mathcal W_{k-1}$, together with the deleted position, which has
$k$ choices, and a letter $a$ not occurring in $v$, which has
$n-k+1$ choices.  Moreover,
$
 \operatorname{wt}(w)\epsilon(a)
 =\operatorname{wt}(v)\epsilon(a)^2
 =\operatorname{wt}(v).
$
Thus pairs with $a$ in $w$ have total weight $k(n-k+1)A_{k-1}$, proving \eqref{eq:rewritten-word-recurrence}.
\end{proof}
\end{remark}

%%%%
\section*{AI Disclosure}
The proof of part (v) of Theorem~\ref{thm:main} and the combinatorial
proof of Lemma~\ref{lem:L-at-root-sign-calculations} given in
Remark~\ref{rem:combinatorial-proof-of-lemma} were obtained through
several queries to {\tt ChatGPT 5.5 Pro} and {\tt ChatGPT 5.6 Pro}.
The authors subsequently rewrote both proofs and take full responsibility for their correctness.

%%%%

%%%%
\section*{Acknowledgements}
The authors thank Laura Escobar and Dennis Stanton for helpful conversations.
The first author was supported by the National Research Foundation of Korea (NRF) grant funded by the Korea government RS-2025-00557835.
The second author was supported by NSF grant DMS-2450430.
%%%%

\bibliographystyle{abbrv}
\bibliography{references.bib}

\end{document}